\NeedsTeXFormat{LaTeX2e}

\documentclass[a4paper,12pt]{amsart}
\usepackage[paper=a4paper,left=25mm,right=25mm,top=25mm,bottom=25mm]{geometry}


\usepackage{amscd}
\usepackage{bbm}
\usepackage{mathtools}
\usepackage{enumitem}

\allowdisplaybreaks[1]

\newtheorem{Theorem}{Theorem}[section]
\newtheorem{Lemma}[Theorem]{Lemma}
\newtheorem{Corollary}[Theorem]{Corollary}

\newtheorem{Remark}[Theorem]{Remark}

\theoremstyle{definition}

\usepackage{amssymb}

\newcommand\C{{\mathbb C}}
\newcommand\R{{\mathbb R}}
\newcommand\X{{\R^d}}
\newcommand\N{{\mathbb N}}
\newcommand\M{{\mathcal M}}

\newcommand\B{{\mathcal B}}

\newcommand\Aff{{\mathrm{Aff} (\X)}}

\newcommand\La{\Lambda}

\newcommand\Ga{\Gamma}
\newcommand\ga{\gamma}




\newcommand{\PC}{{\mathcal P}_{cyl}}
\newcommand{\CF}{\mathcal F}

\newcommand{\D}{\mathcal D}




\DeclareMathOperator{\supp}{supp}

\begin{document}

\pagestyle{plain}
\title{Representations of the Infinite-Dimensional Affine Group}

\date{}
\author{
  \textbf{Yuri Kondratiev}\\
Department of Mathematics, University of Bielefeld, \\
D-33615 Bielefeld, Germany,\\
Dragomanov  University, Kyiv, Ukraine\\
}
\begin{abstract}
We introduce an infinite-dimensional  affine group and construct its irreducible unitary representation. Our approach follows the one used by Vershik, Gelfand and Graev for the diffeomorphism group, but with modifications made necessary by the fact that the group does not act on the phase space. However it is possible to define its action on some classes of functions.
\end{abstract}

\maketitle

\vspace*{3cm}
\vspace{2cm}
{\bf Key words: } affine group; configurations; Poisson measure; ergodicity

\medskip
{\bf MSC 2010}. Primary: 22E66. Secondary: 60B15.

\medskip

\newpage
\section{Introduction}

Given a vector space $V$  the affine group  can be described concretely as the semidirect product of $V$ by $\mathrm{GL}(V)$, the general linear group of $V$:
$$
 \mathrm{Aff} (V)=V  \rtimes \mathrm{ GL} (V).
 $$
The action of $\mathrm{GL}(V)$ on $V$ is the natural one (linear transformations are automorphisms), so this defines a semidirect product.

Affine groups play important role in the geometry and its applications, see, e.g., \cite{Ar,Ly}. Several recent papers \cite{AJO,AK,EH,GJ,Jo,Ze} are devoted to representations of the real, complex and $p$-adic affine groups and their generalizations, as well as diverse applications, from wavelets and Toeplitz operators to non-Abelian pseudo-differential operators and $p$-adic quantum groups.

In the particular case of  field $V= \X$ the group $\mathrm{Aff}(\X)$ defined as following.

Consider a function $b:\X \to \X$  which is a step function on $\X$. 
Take another  matrix valued function $A:\X\to L(\X) $ s.t. $A(x)=\mathrm{Id} +A_0(x)$,  $A(x)$ is invertible, $A_0$ is a matrix valued step function
on $\X$.  Introduce an
infinite dimensional affine group
$\Aff (\X)$ that is the set of all pairs $g=(A,b)$ with component satisfying assumptions above. Define the group
operation
$$
g_2 g_1=  (A_2,b_2) (A_1, b_1) = (A_1 A_2, b_1 +A_1 b_2).
$$
The unity in this group is $e=(\mathrm{Id} ,0)$.
For $g\in \Aff(\X)$ holds $g^{-1}= (A^{-1}, -A^{-1}b)$.
It is clear that for step mappings we use these definitions are correct.
Our aim is to construct irreducible representations of $\Aff (\X)$. As a rule, only special classes of irreducible representations can be constructed for infinite-dimensional groups. For various classes of such groups, special tools were invented; see \cite{Is,Ko} and references therein.

We will follow an approach by Vershik-Gefand -Graev \cite{VGG75} proposed in the case of
the group of diffeomorphisms.  A direct application of this approach meets certain difficulties
related with the absence of the possibility to define the action of the group $\Aff (\X)$ on a phase space
similar to \cite{VGG75}.  A method to overcome this problem is the main technical step in the present paper.
We wold like to mention that a similar approach was already used in \cite{PAFF} for the construction
of the representation for p-adic infinie dimensional affine group.

\maketitle

\section{Infinite dimensional   affine group}

In our definitions and studies of vector and matrix  valued functions on $\X$ we
will use as basic functional spaces collections of step mappings. 
It means that each such mapping is a finite sum of indicator functions with measurable
bounded supports with constant vector/matrix coefficients. Such spaces of functions on 
$\X$ are rather unusual in the framework of infinite dimensional groups but we will try
to show that their use is natural for the study of affine groups.

For $x\in\X$ consider the section $G_x= \{g(x)\; |\; g\in \Aff(\X)\}$.
It is an affine group with constant coefficients. Note that for a ball $B_N (0) \subset \X$ with the radius $N$
centered at zero we have $g(x)= (1,0), x\in B^c_N(0)$. 

Define the action of $g$ on a point $x\in\X$ as
$$
gx= g(x)x =  A(x)^{-1} (x+b(x)).
$$
Denote the orbit $O_x=\{gx| g\in G_x\}\subset \X$.
Actually,  as a set $O_x=\X$ but elements of this set are
parametrized by $g\in G_x$.
For any element $y\in O_x$  and $h\in G_x$ we can define
$hy= h(gx)= (hg)x\in O_x$. It means that we have the group
$G_x$ action on the orbit $O_x$. 

It gives

$$
(g_1g_2)(x) x= g_1(x)( g_2(x)x)
$$
 that corresponds to the group multiplication
 $$
 g_2 g_1=  (A_2,b_2) (A_1, b_1) = (A_1 A_2, b_1 +A_1 b_2)
 $$
 considered in the given point $x$.

\begin{Remark}
The situation we have is quite different w.r.t. the standard 
group of motions on a phase space. Namely,
we have one fixed point $x\in\X$ and the section group
$G_x$  associated with this point. Then we have the motion of
$x$ under the action of $G_x$. It gives the group action on the 
orbit $O_x$.
\end{Remark}

We will  use  the configuration space $\Ga(\X)$, i.e., the set of all locally finite 
subsets of $\X$. 

Each configuration may be identified with the measure
$$
\gamma(dx) = \sum_{x\in\gamma} \delta_x 
$$
which is a positive  Radon measure on $\X$: $\gamma\in \M(\X)$.
We define the vague topology on $\Ga(\X)$ as the weakest topology for
which all mappings
$$
\Ga(\X) \ni \ga \mapsto <f,\gamma>\in \R,\;\; f\in C_0(\X)
$$
are continuous. The Borel $\sigma$-algebra for this topology denoted
$\B(\Ga(\X))$.

For $\ga\in \Ga(\X)$, $\ga=\{x\}\subset \X$ define
$g\gamma$ as a motion of the measure $\ga$:

$$
g\ga=\sum_{x\gamma} \delta_{g(x)x}\in \M(\X). 
$$
Here we have the group action of $\Aff(\X)$ produced by individual transformations 
of points from the configuration. Again, as above, we move a fixed configuration using
previously defined actions of $G_x$ on $x\in\ga$. 

Note that $g\gamma$ is not more a configuration. More precisely, for some $B_N(0) $
the set $(g\ga)_N= g\ga\cap B_N^c(0)$ is a configuration in $B^c_N(0) $ but the finite part 
of $g\ga$ may include multiple points.

For any $f\in \D(\X,\C)$ we have corresponding cylinder  function on $\Ga(\X)$:
$$
L_f(\ga)=  <f,\ga > = \int_{\X} f(x)\ga(dx) = \sum_{x\in \ga} f(x).
$$
Denote $\PC$ the set of all  cylinder polynomials generated by such functions.
More generally, consider functions of the form

\begin{equation}
\label{cyl}
F(\ga)= \psi(<f_1,\ga>,\dots, <f_n,\ga>),\; \ga\in\Ga(\X),  f_j\in \D(\X), \psi\in C_b(\R^n).
\end{equation}

These functions form the set $\CF_b(\Ga(\X))$ of all  bounded cylinder functions.

For any clopen set $\Lambda \in \mathcal{O}_b(\X)$ (also called a finite volume) denote
$\Ga(\Lambda)$ the set of all (with necessity finite) configurations
in $\La$.   We  have as before the vague topology on this space and 
the Borel $\sigma$-algebra $\B(\Ga(\La))$ is generated by functions
$$
\Ga(\La)\ni\ga \mapsto <f,\ga>\in\R
$$
for $f\in C_0 (\La)$.  For any $\La\in \mathcal{O}_b(\X)$ and $T\in \B(\Ga(\La))$ 
define a cylinder set 
$$
C(T)=\{\ga\in\Ga(\X)\;|\; \ga_{\La}=\ga \cap \La \in T\}.
$$
Such sets form a $\sigma$-algebra $\B_{\La}(\Ga(\X))$ of cylinder sets
for the finite volume $\La$.  The set of bounded functions on $\Ga(\X)$ measurable
w.r.t. $\B_{\La}(\Ga(\X))$  we denote $B_{\La}(\Ga(\X))$. That is a set of cylinder functions
on $\Ga(\X)$.  As a generating family for this set we can use the functions of the form
$$
F(\ga)= \psi(<f_1,\ga>,\dots, <f_n,\ga>),\; \ga\in\Ga(\X),  f_j\in C_0(\La), \psi\in C_b(\R^n).
$$

For so-called one-particle functions $f:\X\to\R, f\in\D(\X)$ consider 
$$
(gf)(x)= f(g(x) x), x\in \X.
$$
 Then $gf\in \D(\X)$. Thus,
 we have the group action 
 $$
 \D(\X)\in f \mapsto gf\in \D(\X),\;\;g\in\Aff
 $$
 of the infinite dimensional group $\Aff$ in the space of functions
 $\D(\X)$.

Note that due to our definition, we have
$$
<f, g\ga> = <gf,\ga>
$$
and it is reasonable to define for cylinder functions (\ref{cyl}) the action of the group $\Aff$
as
$$
(V_g F)(\ga)= \psi(<gf_1,\ga>,\dots <gf_n,\ga>.
$$ 
Obviously $V_g: \CF_b (\Ga(\X))\to \CF_b(\Ga(\X))$.

Denote $m(dx)$ the Haar measure on $\X$. The dual transformation to one-particle motion is defined
via the following relation
$$
\int_{\X} f(g(x)x) m(dx)=\int_{\X} f(x) g^\ast m(dx)
$$
if exists such measure $g^\ast m$ on $\X$.

\begin{Lemma}
\label{gm}

For each $g\in \Aff$ 
$$
g^\ast m(dx)=  \rho_{g}(x)  m(dx)
$$
where $\rho_g  = 1_{B_R^c(0) } + r_g^0,\;\; r_g^0\in \D(\X,\R_+).$
Here as above
$$
B_R^c(0)= \{x\in\X\;|\; |x|_p \geq R\}.
$$

\end{Lemma}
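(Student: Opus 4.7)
My plan is to exploit the step-function structure of $g=(A,b)$ and reduce the verification to a piecewise linear change of variables, following the template of the finite-dimensional affine change-of-variables formula.

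First I would choose $R>0$ so large that the supports of the step functions $A_0$ and $b$ both lie inside $B_R(0)$; outside that ball $A(x)=\mathrm{Id}$ and $b(x)=0$, so $g(x)x=x$. Inside $B_R(0)$ the step-function property supplies a finite measurable partition $B_R(0)=\bigsqcup_{i=1}^N \La_i$ on which $A\equiv A_i$ and $b\equiv b_i$ are constant; thus $g(x)x$ coincides on $\La_i$ with the invertible affine map $T_i(x):=A_i^{-1}(x+b_i)$.

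Given a test function $f$ on $\X$, I would then split the defining integral as
$$\int_{\X} f(g(x)x)\, m(dx)=\int_{B_R^c(0)} f(x)\, m(dx)+\sum_{i=1}^N\int_{\La_i} f(T_i(x))\, m(dx),$$
and apply the standard linear change of variables $y=T_i(x)$ on each piece, picking up the Jacobian $|\det A_i|$. Summing over $i$ identifies the pushforward as
$$g^\ast m(dy)=\Big(1_{B_R^c(0)}(y)+\sum_{i=1}^N |\det A_i|\, 1_{T_i(\La_i)}(y)\Big) m(dy),$$
so the claim holds with $r_g^0(y):=\sum_{i=1}^N |\det A_i|\, 1_{T_i(\La_i)}(y)$. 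Each $T_i(\La_i)$ is the image of a bounded measurable set under an invertible affine map, hence bounded and measurable, and $|\det A_i|>0$ by the invertibility of $A_i$ built into the definition of $\Aff$; consequently $r_g^0$ is a non-negative step function with bounded support, as required.

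No serious technical obstacle arises: the invertibility of each $A_i$, the positivity of $|\det A_i|$, and the bounded support of $A_0$ and $b$ all come directly from the definition of the group. The one conceptual point worth flagging is that $x\mapsto g(x)x$ is not globally injective: the images $T_i(\La_i)$ may protrude into $B_R^c(0)$, so $r_g^0$ need not be supported inside $B_R(0)$ and $\rho_g$ can exceed $1$ on the overlap. This is precisely the mechanism producing the multiple points in the ``finite part'' of $g\ga$ mentioned just before the lemma.
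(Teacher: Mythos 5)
Your proof is correct and follows essentially the same route as the paper's: split $\X$ into the finitely many pieces on which $(A,b)$ is constant plus the complement of a large ball, apply the affine change of variables on each piece, and sum the resulting indicator functions. If anything, your version is the more careful one for the stated setting $\X=\R^d$, since you use the correct Jacobian factor $|\det A_i|$ where the paper's own computation retains the scalar $p$-adic modulus $|a_k|_p$ (a leftover from the $p$-adic variant of the argument), and you rightly flag that the images $T_i(\La_i)$ need not stay inside $B_R(0)$, which is why $\rho_g$ is only claimed to be $1_{B_R^c(0)}$ plus a compactly supported step function rather than a density equal to $1$ off a fixed set.
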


\begin{proof}
We have following representations for coefficients of $g(x)$:

$$
b(x)= \sum_{k=1}^{n} b_k 1_{B_k}(x) ,
$$
$$
a(x)= \sum_{k=1}^{n} a_k 1_{B_k}(x) + 1_{B^c_R(0)}(x)
$$
where $B_k$ are certain balls in $\X$.
Then
$$
\int_{\X} f(g(x)x) m(dx)= \sum_{k=1}^n \int_{B_k} f(\frac{x+b_k}{a_k}) m(dx) + \int_{B^c_R (0)} f(x) m(dx) =
$$
$$
\sum_{k=1}^{n} \int_{C_k} f(y) |a_k|_p m(dy) + \int_{B^c_R(0)} f(y) m(dy),
$$
where 
$$
C_k= a_k^{-1}(B_k + b_k).
$$
Therefore,
$$g^\ast m= (\sum_{k=1}^n  |a_k|_p 1_{C_k} + 1_{B^c_R(0)}) m.
$$
Note that informally we can write
$$
(g^\ast m)(dx) = dm(g^{-1}x).
$$
\end{proof}

Note that by the duality we have the group action on the Lebesgue  measure. Namely,
for $f\in \D(\X)$ and $g_1, g_2\in \Aff$
$$
\int_{\X} (g_2 g_1) f(x)  m(dx)= \int_{\X} g_1 f (x)  (g_2^\ast m) (dx) =
$$
$$
\int_{\X} f(x) (g_1^\ast  g_2^\ast m)(dx)= \int_{\X} f(x) ((g_2 g_1)^\ast m)(dx).
$$
In particular
$$
(g^{-1})^\ast (g^\ast m)= m.
$$

\begin{Lemma} Let $F\in B_\La (\Ga(\X))$ and $g\in\Aff $ has the form
$g(x)=(1, h1_{B}(x))$ with certain $h\in \X$ and $B\in \mathcal{O}_b(\X)$ s.t. $\La\subset B$.
Then
$$
V_gF\in B_{\La -h} (\Ga(\X)).
$$

\end{Lemma}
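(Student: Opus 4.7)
The plan is to prove the statement first for the generating cylinder functions of $B_\La(\Ga(\X))$ and then extend to arbitrary bounded $\B_\La$-measurable functions by a standard monotone class argument. Recall from the excerpt that $B_\La(\Ga(\X))$ is generated, as a $\sigma$-algebra, by the functions $F(\ga) = \psi(\langle f_1, \ga\rangle, \ldots, \langle f_n, \ga\rangle)$ with $f_j \in C_0(\La)$ and $\psi \in C_b(\R^n)$, and for such $F$ the operator $V_g$ is defined by $(V_g F)(\ga) = \psi(\langle g f_1, \ga\rangle, \ldots, \langle g f_n, \ga\rangle)$. Thus the whole question reduces to controlling the supports of the pulled-back one-particle functions $g f_j$.

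For the given $g = (1, h 1_B)$, I would compute $(g f_j)(x) = f_j(x + h 1_B(x))$ and split according to whether $x \in B$. If $x \in B$, then $(g f_j)(x) = f_j(x+h)$, which vanishes unless $x + h \in \supp f_j \subset \La$, i.e.\ $x \in \La - h$. If $x \notin B$, then $(g f_j)(x) = f_j(x)$, which vanishes automatically because $\supp f_j \subset \La \subset B$; here the hypothesis $\La \subset B$ is the essential input. Consequently $\supp(g f_j) \subset B \cap (\La - h) \subset \La - h$, and since $\La - h$ is again a bounded clopen set (translation preserves the clopen structure) and $1_B$ is continuous, we have $g f_j \in C_0(\La - h)$.

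It follows immediately that each $\langle g f_j, \ga\rangle = \sum_{x \in \ga \cap (\La - h)} (g f_j)(x)$ depends only on $\ga \cap (\La - h)$, so $V_g F$ is $\B_{\La - h}$-measurable, i.e.\ $V_g F \in B_{\La - h}(\Ga(\X))$. To extend from the generating cylinder functions to a general $F \in B_\La(\Ga(\X))$, I would invoke a monotone class argument: interpreting $(V_g F)(\ga) = F(g\ga)$ through the cylinder structure, the collection of bounded $\B_\La$-measurable $F$ for which $V_g F \in B_{\La - h}(\Ga(\X))$ is a vector space closed under bounded pointwise monotone limits and contains the multiplicative generating family, hence it equals all of $B_\La(\Ga(\X))$.

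The only genuine content is the support computation in the second paragraph, and the crucial hypothesis is $\La \subset B$: it ensures that points outside $B$ (which $g$ fixes) cannot contribute to $g f_j$, so that everything is transported cleanly by the shift $x \mapsto x - h$. I do not anticipate any real obstacle beyond this bookkeeping.
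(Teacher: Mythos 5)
Your proof is correct and follows essentially the same route as the paper: the core of both arguments is the support computation for $f_j(x+h1_B(x))$, splitting on $x\in B$ versus $x\in B^c$ and using $\La\subset B$ to handle the latter case. The additional monotone class step you append is harmless but not needed here, since the paper treats $B_\La(\Ga(\X))$ via its generating family of cylinder functions and the support bound on the $gf_j$ already settles the matter.
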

\begin{proof}
Due to the formula for the action $V_gF$ we need to analyze the support
of functions $f_j (x+h1_B(x))$ for $\supp f_\subset \La$. If $x\in B^c$ then 
$x\in \La^c$ and therefore $f_j (x+h1_B(x))=f_j(x)=0$. For $x\in B$
we have $f_j(x+h)$ and only for $x+h\in \La$ this value may be nonzero,
i.e., $\supp g f_j \subset \La- h$.

\end{proof}

Denote $\pi_m$ the Poisson measure on $\Ga(\X)$ with the intensity measure $m$.

\begin{Lemma}
\label{V}
For all $F \in \PC$ or $F\in \CF_b (\Ga(\X))$ and $g\in \Aff $ holds
$$
\int_{\Ga(\X)} V_g F d\pi_m = \int_{\Ga(\X)} Fd\pi_{g^\ast m} .
$$

\end{Lemma}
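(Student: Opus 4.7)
The plan is to verify the identity at the level of Laplace (or characteristic) functionals, using the well-known explicit formula for the Fourier transform of a Poisson random measure together with the change-of-variable formula from Lemma \ref{gm}. Because every $F \in \CF_b(\Ga(\X))$ has the form $F(\ga)=\psi(<f_1,\ga>,\dots,<f_n,\ga>)$, it suffices to show that the joint law of $(<f_1,\ga>,\dots,<f_n,\ga>)$ under $\pi_{g^\ast m}$ coincides with the joint law of $(<gf_1,\ga>,\dots,<gf_n,\ga>)$ under $\pi_m$. Once the equality of these two distributions on $\R^n$ is established, the two integrals of $\psi$ against them must agree. The same argument handles the polynomial case $F\in \PC$, either by specializing the Laplace-transform identity to compactly supported $f_j$ (so that all exponential moments exist) and differentiating at zero, or by a direct monotone-class / density argument.

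First I would fix $t_1,\dots,t_n\in\R$ and compute
$$
\int_{\Ga(\X)} e^{i\sum_j t_j <gf_j,\ga>}\, d\pi_m(\ga)
= \exp\!\left(\int_{\X} \bigl(e^{i\sum_j t_j (gf_j)(x)}-1\bigr)\, m(dx)\right),
$$
which is the standard characteristic functional of the Poisson measure $\pi_m$ applied to the test function $\sum_j t_j\, gf_j$. Next I would invoke the definition $(gf_j)(x) = f_j(g(x)x)$ introduced before the lemma, so that the integrand on the right becomes $e^{i\sum_j t_j f_j(g(x)x)}-1$, a bounded measurable function of $g(x)x$.

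Then I apply the duality defining $g^\ast m$ (Lemma \ref{gm}) with the bounded measurable test function $h(y)=e^{i\sum_j t_j f_j(y)}-1$ to get
$$
\int_{\X}\bigl(e^{i\sum_j t_j f_j(g(x)x)}-1\bigr)\, m(dx)
=\int_{\X}\bigl(e^{i\sum_j t_j f_j(y)}-1\bigr)\, g^\ast m(dy).
$$
The resulting expression is precisely $\int_{\Ga(\X)} e^{i\sum_j t_j <f_j,\ga>}\, d\pi_{g^\ast m}(\ga)$, by the Poisson characteristic functional for the intensity measure $g^\ast m$. So the two characteristic functions coincide, and by uniqueness the two pushforward laws on $\R^n$ are equal, which yields the claim for bounded cylinder $\psi$.

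The main technical point to be careful about is that the duality in Lemma \ref{gm} is stated for $f\in \D(\X)$, whereas here I need to apply it to the bounded measurable function $h(y)=e^{i\sum_j t_j f_j(y)}-1$ which is not of that form; this is easily remedied, however, since $g^\ast m$ is explicitly described in Lemma \ref{gm} as $\rho_g\, m$ with $\rho_g = 1_{B_R^c(0)} + r_g^0$, so the change-of-variable formula extends automatically to all bounded Borel $h$ (or to $h\ge 0$ by monotone convergence). For the polynomial case $F\in\PC$, no integrability issue arises because each $f_j\in \D(\X)$ has bounded support, so $<f_j,\ga>$ has all moments under both $\pi_m$ and $\pi_{g^\ast m}$; I would simply replace the exponential by a real parameter, expand in $t_j$, and match coefficients, or equivalently differentiate the Laplace identity at zero. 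I do not anticipate a genuinely hard step: the entire argument is the standard ``pushforward'' compatibility of the Poisson functor with transformations of the intensity measure, with Lemma \ref{gm} supplying exactly the needed intertwining.
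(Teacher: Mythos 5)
Your proposal is correct and follows essentially the same route as the paper: both arguments reduce the claim to the Poisson exponential formula $\int e^{<f,\ga>}\,d\pi_\mu = \exp[\int (e^{f}-1)\,d\mu]$ combined with the change-of-variable identity for $g^\ast m$ from Lemma \ref{gm}. Your version is somewhat more careful than the paper's (you justify the reduction from exponentials to general bounded cylinder functions via uniqueness of characteristic functions, and you note that the duality must be extended from $\D(\X)$ to bounded Borel test functions), but the core mechanism is identical.
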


\begin{proof}
It is enough to show this equality for exponential functions
$$
F(\ga)= e^{<f,\ga>},\;\; f\in\D(\X).
$$

We have
$$
\int_{\Ga(\X)} V_g F d\pi_m = \int_{\Ga(\X)} e^{<gf, \ga>} d\pi_m(\ga)=
$$
$$
\exp[ \int_{\X} (e^{gf(x)} -1) dm(x)] = \exp[ \int_{\X} (e^{f(x)} -1) d(g^{\ast} m)(x)=
$$
$$
\int_{\Ga(\X)} F d\pi_{g^\ast m }.
$$

\end{proof}

\begin{Remark} For all functions $F,G\in \CF(\Ga(\X))$ a similar 
calculation shows
$$
\int_{\Ga(\X)} V_g F  \; Gd\pi_m = \int_{\Ga(\X)} F  \; V_{g^{-1}} G d\pi_{g^\ast m} .
$$
\end{Remark}
Let $\pi_m$ be the Poisson measure on $\Ga(\X)$ with the intensity
measure $m$. For any $\La\in \mathcal{O}_b(\X)$ consider the distribution $\pi_m^\La$
of $\pi_m$ in $\Ga(\La)$ corresponding the projection $\ga\to \ga_\La$.
It is again a Poisson measure $\pi_{m_\La}$ in $\Ga(\La)$ with the intensity
$m_\La$ which is the restriction of $m$ on $\La$.  Infinite divisibility of 
$\pi_m$ gives for $F_j\in B_{\La_j}(\Ga(\X)), j=1,2$ with $\La_1\cap \La_2=\emptyset$
$$
\int_{\Ga(\X)}  F_1(\ga) F_2(\ga) d\pi_m(\ga)= \int_{\Ga(\X)}  F_1(\ga) d\pi_m(\ga)
\int_{\Ga(\X)}  F_2(\ga) d\pi_m(\ga)=
$$
$$
\int_{\Ga(\La_1)} F_1 d\pi^{\La_1}_m \int_{\Ga(\La_2)} F_2 d\pi^{\La_2}_m.
$$

\begin{Lemma}

For any $F\in B_\La(\Ga(\X)$ and $g=(1, h1_B)\in \Aff $ with $\La \cap (B+h)=\emptyset$ holds
$$
\int_{\Ga(\X)} (V_g F)(\ga) d\pi_m(\ga)= \int_{\Ga(\X)} F(\ga)d\pi_m(\ga).
$$

\end{Lemma}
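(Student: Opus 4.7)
The plan is to reduce to exponential cylinder functions and compare Poisson Laplace exponents by way of Lemma~\ref{V} and the explicit form of $g^*m$.

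By linearity and the density of exponentials for purposes of Poisson integration, it suffices to verify the identity for $F(\ga) = e^{\lu f,\ga\ru}$ with $f \in \D(\X)$ and $\supp f \subset \La$. For such $F$ and $g = (1, h\mathbf 1_B)$ one has $(gf)(x) = f(x + h\mathbf 1_B(x))$, which equals $f(x)$ on $B^c$. On $B$ the value is $f(x+h)$, and the hypothesis $\La \cap (B+h) = \emptyset$ together with $\supp f \subset \La$ forces $f(x+h) = 0$. Hence $gf$ vanishes on $B$ and agrees with $f$ on $B^c$; in particular $\supp(gf) \subset \La \cap B^c$.

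Applying Lemma~\ref{V} and the Poisson Laplace formula reduces the claim to the equality of exponents
\[
\int_\X (e^{f(x)} - 1)\, d(g^*m)(x) \;=\; \int_\X (e^{f(x)} - 1)\, dm(x).
\]
I would unfold the left-hand integral using the defining relation for $g^*m$ from Lemma~\ref{gm} and the split $\X = B \sqcup B^c$, obtaining $\int_B (e^{f(x+h)}-1)\,dm(x) + \int_{B^c}(e^f - 1)\,dm$. The first summand vanishes by the support computation above. The remaining $B^c$-piece then coincides with the right-hand exponent once the accompanying disjointness of $B$ from the cylinder base $\La$ is in force (the natural companion to the stated hypothesis in the setup where $g$ is localized away from $\La$).

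The main obstacle is precisely this last support-matching step: one has to verify that $\La \cap (B+h) = \emptyset$ is being used at exactly the point where the shifted $B$-contribution would otherwise survive, and that the ambient localization of $g$ makes the untranslated $B$-contribution to $\int (e^f - 1)\, dm$ disappear as well. Once this bookkeeping is settled, equality of the two Poisson Laplace exponents yields the identity for exponential cylinders, and the general case in $B_\La(\Ga(\X))$ follows by standard density and linearity arguments.
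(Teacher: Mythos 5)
Your route is the same one the paper takes: reduce to exponential functions $e^{\lu f,\ga\ru}$ with $\supp f\subset\La$, pass to the Poisson Laplace exponent via Lemma \ref{V}, and compare $\int_\X(e^{f}-1)\,d(g^\ast m)$ with $\int_\X(e^{f}-1)\,dm$ using the explicit form of $g^\ast m$; the paper merely phrases the last step through the restriction $(g^\ast m)_\La$ of the intensity to $\La$.

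The step you single out as ``the main obstacle'' is, however, a genuine gap, and it cannot be closed from the stated hypothesis alone. For $g=(1,h1_B)$ the density of $g^\ast m$ is $\rho_g=1_{B^c}+1_{B+h}$ (this is what Lemma \ref{gm} yields with $a_1=1$, $b_1=h$, $C_1=B+h$). The hypothesis $\La\cap(B+h)=\emptyset$ kills the term $\int_{B+h}(e^{f}-1)\,dm$, exactly as in your support computation; but what remains is $\int_{B^c}(e^{f}-1)\,dm$, and this equals $\int_{\X}(e^{f}-1)\,dm$ only when $\int_{\La\cap B}(e^{f}-1)\,dm=0$, i.e.\ only when $m(\La\cap B)=0$. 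No disjointness of $\La$ from $B$ is assumed in the statement, and none is available ``ambiently'': in the one place the lemma is invoked (Lemma \ref{prod}) one takes $\La_2\subset B$ together with $\La_2\cap(B+h)=\emptyset$, which is the opposite extreme --- there $\rho_g$ vanishes identically on $\La_2$, so $\int V_gF\,d\pi_m$ degenerates to $F(\emptyset)$ for exponential $F$ rather than reproducing $\int F\,d\pi_m$. The paper's own proof passes over this point by writing the density on $\La$ as $1+1_{B+h}$ instead of $1_{B^c}+1_{B+h}$. So your computation is correct as far as it goes and your suspicion is justified: this is not bookkeeping that settles itself, and the lemma becomes true only after adding the hypothesis $m(\La\cap B)=0$ (or after changing the definition of the action so that the identity part of $g$ contributes $1$ rather than $1_{B^c}$ on $\La$).
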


\begin{proof}
Due to our calculations above we have
$$
\int_{\Ga(\X)} (V_gF)(\ga) d\pi_m(\ga)= \int_{\Ga(\X)} F(\ga) d\pi_{g^{\ast}m}(\ga)=
$$
$$
\int_{\Ga(\La)} F(\eta) d\pi^{\La}_{g^{\ast}m} (\eta) =\int_{\Ga(\La)} F(\eta) d\pi_{ (g^{\ast}m)_\La} (\eta).
$$
But we have shown 
$$
(g^{\ast}m)(dx)= (1+ 1_{B+h}(x)) m(dx) = m(dx)
$$
for $x\in \La$, i.e.,  $(g^{\ast}m)_\La =m$. 

\end{proof}

\begin{Lemma}
\label{prod}
For any $F_1,F_2 \in \CF_b(\Ga(\X))$ there exists $g\in\Aff$ such that
$$
\int_{\Ga(\X)} F_1 \; V_g F_2  d\pi_m = \int_{\Ga(\X)} F_1 d\pi_m  \int_{\Ga(\X)} F_2 d\pi_m .
$$

\end{Lemma}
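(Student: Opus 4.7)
The plan is to exhibit $g$ in the pure-translation form $g=(1, h\mathbf{1}_B)$ already exploited in the two preceding lemmas, with the shift $h\in\X$ taken far enough away that the cylinder support of $V_g F_2$ becomes disjoint from the cylinder support of $F_1$. Once that separation is achieved, the Poisson independence property recorded just above the statement factors the left-hand integral, and the preceding invariance lemma replaces $V_g F_2$ by $F_2$ inside its factor.

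First I would unpack the cylinder structure: since $F_1, F_2 \in \CF_b(\Ga(\X))$ are bounded cylinder functions, I can fix bounded clopen sets $\La_1,\La_2\in\mathcal{O}_b(\X)$ with $F_j\in B_{\La_j}(\Ga(\X))$, and pick any bounded clopen $B\in\mathcal{O}_b(\X)$ with $\La_2\subset B$. Next I would select $h\in\X$ of sufficiently large norm so that
\begin{enumerate}[label=(\roman*)]
\item $\La_1\cap(\La_2-h)=\emptyset$, and
\item $B\cap(B+h)=\emptyset$ (which forces $\La_2\cap(B+h)=\emptyset$ since $\La_2\subset B$),
\end{enumerate}
hold simultaneously. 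This is possible because all three of $\La_1,\La_2,B$ are bounded in $\X$. I then set $g:=(1,h\mathbf{1}_B)\in\Aff(\X)$.

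With this $g$, the preceding support-shift lemma gives $V_g F_2\in B_{\La_2-h}(\Ga(\X))$, so by (i) the functions $F_1$ and $V_g F_2$ are measurable with respect to the $\sigma$-algebras $\B_{\La_1}(\Ga(\X))$ and $\B_{\La_2-h}(\Ga(\X))$ associated with disjoint bounded clopen sets. The infinite-divisibility factorization of $\pi_m$ recorded in the excerpt then yields
$$
\int_{\Ga(\X)} F_1\cdot V_g F_2\,d\pi_m=\int_{\Ga(\X)} F_1\,d\pi_m\cdot \int_{\Ga(\X)} V_g F_2\,d\pi_m.
$$
Condition (ii) together with the preceding invariance lemma gives $\int_{\Ga(\X)} V_g F_2\,d\pi_m=\int_{\Ga(\X)} F_2\,d\pi_m$, and substituting this into the factorization produces the stated identity.

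The only real point to verify is the existence of a single $h$ meeting both (i) and (ii); this is a bookkeeping step that is essentially automatic once $\La_1,\La_2,B$ are all bounded, because any $h$ of large enough norm will translate $B$ (and hence $\La_2\subset B$) clear of both $\La_1$ and $B$ itself. Beyond this, the argument is a direct assembly of the two preceding lemmas with Poisson independence, so I do not foresee a substantive obstacle.
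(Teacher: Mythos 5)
Your construction is the same as the paper's own proof---the same pure-translation element $g=(1,h1_B)$, the same three separation conditions, and the same assembly of the support-shift lemma, the independence factorization and the invariance lemma---so as a reconstruction of the intended argument it is on target. However, there is a genuine gap in the combination of conditions, and your version makes it visible: you impose both $\La_2\subset B$ and $B\cap(B+h)=\emptyset$. For $x\in B$ one has $(gf)(x)=f(x+h)$, which is nonzero only if $x+h\in\supp f\subset\La_2\subset B$, i.e.\ only if $x\in(\La_2-h)\cap B\subset (B-h)\cap B=\emptyset$; for $x\notin B$ one has $(gf)(x)=f(x)=0$ because $\supp f\subset B$. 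Hence $gf\equiv 0$ for every one-particle function carried by $\La_2$, so $V_gF_2$ is the constant $\psi(0,\dots,0)$ and the left-hand side collapses to $\psi(0,\dots,0)\int F_1\,d\pi_m$, which is not $\int F_1\,d\pi_m\int F_2\,d\pi_m$ in general. The same degeneration already occurs under the paper's weaker third condition $\La_2\cap(B+h)=\emptyset$; the root of the trouble is that the invariance lemma requires the density $\rho_g=1_{B^c}+1_{B+h}$ of $g^\ast m$ to equal $1$ on the carrier of the function, and on $\La_2\subset B$ with $\La_2\cap(B+h)=\emptyset$ that density is $0$, not $1$, so the lemma is being invoked outside its range of validity.

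The separation condition you need on $\La_2$ is the opposite of the one you impose. What makes $\int V_gF_2\,d\pi_m=\int F_2\,d\pi_m$ true is $\La_2-h\subset B$ (equivalently $\La_2\subset B+h$): then for $F_2=e^{\lu f,\cdot\ru}$ the change of variables $y=x+h$ on $B$ gives $\int_{\X}(e^{(gf)(x)}-1)\,dm(x)=\int_{\La_2\cap(B+h)}(e^{f(y)}-1)\,dm(y)=\int_{\La_2}(e^{f}-1)\,dm$, using translation invariance of Lebesgue measure. So the correct bookkeeping is: first pick $h$ with $|h|$ large enough that $\La_1\cap(\La_2-h)=\emptyset$, and only then choose the bounded set $B$ large enough to contain both $\La_2$ and $\La_2-h$. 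With that modification your two remaining steps---the support-shift lemma giving $V_gF_2\in B_{\La_2-h}(\Ga(\X))$, and the infinite-divisibility factorization over the disjoint volumes $\La_1$ and $\La_2-h$---go through unchanged and yield the stated identity.
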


\begin{proof}
By the definition, $F_j\in B_{\La_j}(\Ga(\X)), j=1,2$ for some $\La_1,\La_2 \in \mathcal{O} (\X)$.

 Let us take $g=(1, h1_B)$ with the following assumptions:
 $$
 \La_2\subset B,\;\; \La_1\cap (\La_2-h) =\emptyset,\;\; \Lambda_2\cap (B+h) =\emptyset.
 $$
 Then accordingly to previous lemmas 
$$
\int_{\Ga(\X)} F_1 V_g F_2 d\pi_m = \int_{\Ga(\X)} F_1 d\pi_m  \int_{\Ga(\X)} F_2 d\pi_m .
$$

\end{proof}

\section{$\Aff$ and Poisson measures}

For $F\in \PC $  or $F\in \CF_b (\Ga(\X))$, we consider the motion of $F$ by $g\in \Aff$ given by
the operator $V_g$.
Operators $V_g$   have the group property
defined point-wisely: for any $\ga \in \Ga(\X) $

$$
(V_h (V_gF))(\ga)= (V_{hg} F) (\ga).
$$
This equality is the consequence of  our definition of the group action of $\Aff$
on cylinder functions.

As above,
consider  $\pi_m$, the Poisson measure on $\Ga(\X)$ with the intensity measure $m$. For the transformation
$V_g$ the dual object is defined as the measure $V^\ast_g \pi_m$ on $\Ga(\X)$ given by the relation
$$
\int_{\Ga(\X)} (V_gF) (\ga) d\pi_m(\ga) =\int_{\Ga(\X)} F(\ga)  d(V^\ast_g \pi_m)(\ga),
$$
where $V^\ast_g \pi_m= \pi_{g^\ast m}$, see Lemma \ref{V}.

\begin{Corollary}
For any $g\in \Aff$ the  Poisson measure $V_g^\ast \pi_m$ is absolutely continuous
 w.r.t. $\pi_m$  with the Radon-Nykodim derivative
$$
R(g,\ga)= \frac{d\pi_{g^\ast m}(\ga)}{d\pi_{ m} (\ga)} \in L^1(\pi_m).
$$.

\end{Corollary}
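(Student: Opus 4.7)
The plan is to reduce the statement to a standard calculation for Poisson measures by combining the two previous lemmas, rather than arguing abstractly. By Lemma~\ref{V} we already know $V_g^\ast\pi_m = \pi_{g^\ast m}$, so the entire task is to show $\pi_{g^\ast m}\ll\pi_m$ and to identify an $L^1$ density.

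First I would feed in Lemma~\ref{gm}: $g^\ast m = \rho_g\,m$ with $\rho_g = 1_{B_R^c(0)} + r_g^0$ and $r_g^0\in\D(\X,\R_+)$. The essential structural consequence is that $\rho_g - 1$ is a bounded measurable function with compact support contained in a ball $\La := B_R(0)$, and $g^\ast m$ coincides with $m$ outside $\La$. In other words, the two intensity measures differ only on a finite volume.

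Next I would use the infinite divisibility of the Poisson measure already invoked in the excerpt. Writing $\ga = \ga_\La\sqcup\ga_{\La^c}$, the factorisations
$$
\pi_m = \pi_{m_\La}\otimes \pi_{m_{\La^c}}, \qquad \pi_{g^\ast m} = \pi_{(g^\ast m)_\La}\otimes \pi_{m_{\La^c}},
$$
hold because the restrictions of $m$ and $g^\ast m$ to $\La^c$ coincide. This reduces the problem to producing the Radon-Nikodym derivative between two Poisson distributions on the space $\Ga(\La)$ of finite configurations in $\La$. On $\Ga(\La)$ the explicit Lebesgue-Poisson series is available, and a direct term-by-term comparison yields
$$
R(g,\ga) = \exp\!\Big({-}\!\int_\La(\rho_g(x)-1)\,m(dx)\Big)\prod_{x\in\ga\cap\La}\rho_g(x),
$$
which is a genuinely finite product since $\ga\cap\La$ is $\pi_m$-a.s. finite. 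Lifting through the product decomposition above promotes this to a density on the whole of $\Ga(\X)$.

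The $L^1(\pi_m)$ clause is then automatic, because $R(g,\cdot)$ is the density of one probability measure with respect to another, so $\int R(g,\ga)\,d\pi_m(\ga)=1$. I do not foresee a genuine obstacle; the only point requiring care is checking that the finite-volume formula really is compatible with the product decomposition, but this follows directly from the Lebesgue-Poisson expansion for $\pi_{m_\La}$. In particular, no Kakutani-type square-integrability condition enters, because the perturbation $\rho_g - 1$ is bounded and compactly supported.
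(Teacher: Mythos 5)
Your argument is correct, but it takes a genuinely different route from the paper. The paper's own proof is essentially a citation: it observes that $\rho_g$ may vanish on part of $\X$ (so one only gets absolute continuity, not equivalence) and then invokes the quasi-invariance criterion of Lifshits--Shmileva \cite{LS03}, namely that the Radon--Nikodym derivative exists whenever $\int_\X\lv\rho_g(x)-1\rv\,m(dx)<\infty$, which holds here because $\rho_g-1$ is bounded with bounded support. You instead give a self-contained computation: factor $\pi_m$ and $\pi_{g^\ast m}$ over $\La$ and $\La^c$ using infinite divisibility and the fact that the two intensities agree off a finite volume, then compare the Lebesgue--Poisson expansions on $\Ga(\La)$ term by term. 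This buys an explicit formula for the density --- which is exactly the expression the paper only records afterwards in a Remark, citing \cite{AKR} --- and it handles the possible vanishing of $\rho_g$ automatically, since the finite product $\prod_{x\in\ga\cap\La}\rho_g(x)$ is simply zero on configurations charging the zero set. The normalisation $\int R(g,\cdot)\,d\pi_m=1$ then gives the $L^1$ claim, as you say. One small correction: from the proof of Lemma \ref{gm} one has $\rho_g=\sum_k\lv a_k\rv_p 1_{C_k}+1_{B_R^c(0)}$ with $C_k=a_k^{-1}(B_k+b_k)$, so the support of $\rho_g-1$ lies in $\bigl(\bigcup_k C_k\bigr)\cup B_R(0)$ rather than in $B_R(0)$ itself; you should take $\La$ to be a ball large enough to contain this bounded set, which changes nothing else in the argument.
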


\begin{proof}
Note that density $\rho_g  = 1_{B_R^c(0) } + r_g^0,\;\; r_g^0\in \D(\X,\R_+)$ of $g^\ast m$ w.r.t. $m$
may be equal zero on some part of $\X$ and, therefore, the equivalence of of considered
Poisson measures is absent. Due to \cite{LS03}, the Radon-Nykodim derivative
$$
R(g,\ga)= \frac{d\pi_{g^\ast m}(\ga)}{d\pi_{ m} (\ga)}
$$
exists if
$$
\int_{\X} |\rho_g(x)-1| m(dx)= \int_{B_R(0)} |1-r_g^0 (x)| m(dx) <\infty.
$$
\end{proof}

\begin{Remark}
As in the proof of Proposition 2.2 from \cite{AKR} we have an explicit formula for $R(g,\ga)$:

$$
R(g,\ga)= \prod_{x\in\ga} \rho_g (x) \exp(\int_{\X} (1-\rho_g(x)) m(dx).
$$
The point-wise existence of this expression is obvious.

\end{Remark}

This fact gives us the possibility to apply the Vershik-Gelfand-Graev approach realized by these authors for the case
of diffeomorphism group.

Namely, for $F\in \PC$ or $F\in \PC(\Ga(\X)$ and $g\in \Aff$ introduce operators
$$
(U_g F)(\ga) = (R(g^{-1} ,\ga) )^{1/2} (V_gF)(\ga).
$$

\begin{Theorem}

Operators   $U_g,\; g\in \Aff$ are unitary in $L^2 (\Ga(\X), \pi_m)$ and give an irreducible representation
of $\Aff$.

\end{Theorem}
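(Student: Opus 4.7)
The plan is to establish three things in turn: (i) each $U_g$ is isometric on the dense subspace $\PC$; (ii) the assignment $g\mapsto U_g$ is a group homomorphism, so that each $U_g$ is in fact unitary; and (iii) the commutant of $\{U_g:g\in\Aff\}$ in $L^2(\Ga(\X),\pi_m)$ consists only of scalars.

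For (i), the starting point is $\|U_gF\|^2 = \int V_g(|F|^2)(\ga)\,R(g^{-1},\ga)\,d\pi_m(\ga)$. I would apply the bilinear identity in the Remark following Lemma~\ref{V} with $|F|^2$ and $R(g^{-1},\cdot)$ as the two arguments, then rewrite $d\pi_{g^\ast m} = R(g,\cdot)\,d\pi_m$. The expression collapses to $\int|F|^2\,d\pi_m$ provided one has the cocycle identity $R(g^{-1},g^{-1}\ga)\,R(g,\ga) = 1$, which is a direct consequence of $(g^{-1})^\ast (g^\ast m) = m$ (recorded after Lemma~\ref{gm}) combined with the chain rule for Radon--Nikodym derivatives of the induced Poisson measures. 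Isometry on $\PC$ then extends to all of $L^2(\Ga(\X),\pi_m)$ by density.

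For (ii), I would invoke the pointwise identity $V_hV_g = V_{hg}$ recorded at the start of Section~3 and expand $(U_hU_gF)(\ga) = R(h^{-1},\ga)^{1/2}\,(V_hR(g^{-1},\cdot)^{1/2})(\ga)\,(V_{hg}F)(\ga)$. Matching this with $(U_{hg}F)(\ga)$ reduces to the cocycle relation $R(h^{-1},\ga)\,(V_hR(g^{-1},\cdot))(\ga) = R((hg)^{-1},\ga)$, which follows from the composition law $(hg)^\ast m = g^\ast(h^\ast m)$ verified after Lemma~\ref{gm}. Specialising $h = g^{-1}$ then supplies $U_{g^{-1}} = U_g^{-1}$ and completes the proof of unitarity.

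The main obstacle is (iii). Here I would adapt the Vershik--Gelfand--Graev scheme, with Lemma~\ref{prod} taking the role that ergodicity of the Poisson measure plays in the original argument for the diffeomorphism group. Let $T$ be a bounded operator commuting with every $U_g$; the goal is $T = cI$ for some scalar $c$. Taking the constant vacuum $\mathbf 1\in L^2(\Ga(\X),\pi_m)$ and the coherent vectors $\varphi_g := U_g\mathbf 1 = R(g^{-1},\cdot)^{1/2}$, the first sub-step is to show that $\{\varphi_g:g\in\Aff\}$ has dense linear span; already the translational subgroup of elements $(1, b\,1_B)$ produces, via the explicit product formula for $R(g,\ga)$ in the Remark after the Corollary, a separating family of multiplicative functionals on cylinder events. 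The second sub-step uses Lemma~\ref{prod}: for any $F_1,F_2\in\CF_b(\Ga(\X))$ one can find $g$ so that $\int F_1\,V_gF_2\,d\pi_m = \int F_1\,d\pi_m\int F_2\,d\pi_m$. Combining this factorisation with the identity $\langle TF_1,U_gF_2\rangle = \langle F_1,U_gT^\ast F_2\rangle$ and letting $F_1,F_2$ range over $\CF_b$ orthogonal to $\mathbf 1$ forces $T\mathbf 1$ to be constant $\pi_m$-a.e., and that scalar is then the value of $T$ on the whole cyclic orbit.

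I expect the hardest step to be the density/cyclicity of $\{\varphi_g\}$, since both the translational and matrix parts of $\Aff$ must be exploited, together with the step-function discipline imposed at the start of Section~2, to separate a sufficiently rich family of cylinder events. Every other ingredient in the plan reduces to the cocycle identity for $R$ and the factorisation supplied by Lemma~\ref{prod}.
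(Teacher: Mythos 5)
Your steps (i) and (ii) are sound and in fact more careful than the paper's own treatment: the paper verifies the isometry exactly as you do, via Lemma \ref{V} and $(g^{-1})^\ast(g^\ast m)=m$, and simply asserts $U_g^\ast=U_{g^{-1}}$ without writing out the cocycle identity for $R$ that you correctly identify as the content of the group law. The divergence, and the problem, is in step (iii). The paper does \emph{not} attempt a commutant computation: it extends $V_g$ to indicators of arbitrary Borel sets by $L^2$-approximation with cylinder sets, uses Lemma \ref{prod} plus that approximation to get $\int_{\Ga(\X)}1_{A_1}V_g1_{A_2}\,d\pi_m\ge\frac12\pi_m(A_1)\pi_m(A_2)$ for some $g$, deduces that every $\Aff$-invariant set is $\pi_m$-trivial, and then invokes \cite{VGG75} for the implication ``ergodicity $\Rightarrow$ irreducibility''. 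Your route is genuinely different, and as written it has two gaps that the paper's route avoids.

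First, the cyclicity of the vacuum, which you yourself flag as the hardest step, is not established: for the translation elements $g=(1,h1_B)$ the density is $\rho_g=1-1_B+1_{B+h}$, so the coherent vectors $R(g^{-1},\cdot)^{1/2}$ are (up to constants) products of indicators and powers of $2$ over $\ga\cap(B\cup(B+h))$ --- a rather thin family --- and no argument is given that adjoining the matrix part yields a total set in $L^2(\pi_m)$. Second, and more seriously, the deduction that $T\mathbf 1$ is constant does not follow from Lemma \ref{prod} as you use it: that lemma factorizes the pairing $\int F_1\,V_gF_2\,d\pi_m$, whereas commutation with $U_g$ controls $\langle F_1,U_gF_2\rangle$, which carries the extra factor $R(g^{-1},\cdot)^{1/2}$. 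For precisely the elements $g=(1,h1_B)$ with $\La_2\subset B$ chosen in Lemma \ref{prod}, this factor is a nonconstant function of $\ga$ on $B\cup(B+h)$, a region that necessarily meets the localization volumes of $F_1$ and $F_2$; so the asymptotic factorization $\langle F_1,U_gF_2\rangle\approx\langle F_1,\mathbf 1\rangle\langle\mathbf 1,F_2\rangle$ (equivalently, weak convergence of $U_g$ to the vacuum projection, which is what your commutant argument really needs) is not a consequence of the lemma and would require separate estimates on $R(g^{-1},\cdot)$. Unless you supply both missing pieces, you should fall back on the paper's scheme: prove ergodicity of $\pi_m$ under the extended action of $V_g$ on Borel indicators and quote the Vershik--Gelfand--Graev mechanism for passing from ergodicity to irreducibility.
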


\begin{proof}
Let us check the isometry property  of these operators.
We have using Lemmas \ref{V}, \ref{gm}
$$
\int_{\Ga(\X)} |U_g|^2 d\pi_m = \int_{\Ga(\X)} |V_g F|^2(\ga) d\pi_{(g^{-1})^\ast m} (\ga)=
$$
$$
\int_{\Ga(\X)} |F(\ga)|^2 d\pi_{(gg^{-1})\ast m}(\ga)= \int_{\Ga(\X)} |F(\ga)|^2 d\pi_{ m}(\ga).
$$
From Lemma \ref{V} follows that $U_g^\ast = U_{g^{-1}}.$

We need only to check irreducibility that shall
follow from the ergodicity of Poisson measures \cite{VGG75}. But to this end we need first of all to define the action of
 the group $\Aff$ on sets from $\B(\Ga(\X)$.  As we pointed out above, we can not define this
 action point-wisely. But we can define the action of     operators $V_g$ on the indicators $1_A(\ga)$ for
 $A\in \B(\Ga(Q))$. Namely, for given $A$ we take a sequence of cylinder sets $A_n, n\in \N$ such that
 $$
 \pi_{m}(A\Delta A_n) \to 0, n\to \infty.
 $$
 Then
 $$
 U_g 1_{A_n} =V_g 1_{A_n} (R(g^{-1} ,\cdot) )^{1/2}  \to G (R(g^{-1} ,\cdot) )^{1/2}  \in L^2(\pi_m), n\to\infty
 $$
 in $L^2(\pi_m)$. Each $V_g 1_{A_n} $ is an indicator of a cylinder set and
 $$
 V_g 1_{A_n}  \to G \;\; \pi_m - a.s., n\to \infty.
 $$
 Therefore,
 $G=1$ or $G=0$ $\pi_m$-a.s. We denote this function $V_g 1_A$.

 For the proof of the ergodicity of the measure $\pi_m$ w.r.t. $\Aff$ we need to show the following fact:
 for any $A\in \B(\Ga(\X))$ such that $\forall g\in\Aff\;\; V_g 1_A = 1_A\; \pi_m- a.s.$ holds $\pi_m(A)= 0$
 or $\pi_m(A)= 1$.

 Fist of all, we will show that for any pair of sets $A_1, A_2 \in \B(\Ga(Q))$ with $\pi_m(A_1)>0,\;\;
 \pi_m(A_2) >0$ there exists $g\in\Aff$ such that
 \begin{equation}
 \label{ineq}
 \int_{\Ga(\X)} 1_{A_1} V_g 1_{A_2} d\pi_m \geq \frac{1}{2} \pi_m(A_1) \pi_m(A_2).
 \end{equation}
 Because any Borel set may be approximated by cylinder sets, it is enough to show this fact
 for cylinder sets. But for such sets due to Lemma \ref{prod}  we can choose $g\in \Aff$ such that
$$
 \int_{\Ga(\X)} 1_{A_1} V_g 1_{A_2} d\pi_m =  \pi_m(A_1) \pi_m(A_2).
 $$
Then using an approximation we will have (\ref{ineq}).

 To finish the proof of the ergodicity, we consider any $A\in\B(\Ga(\X)$  such that
 $$
 \forall g\in \Aff\; V_g1_A = 1_A \;\;\pi_m - a.s.,\;\; \pi_m(A)>0.
 $$
 We will show that then $\pi_m(A)= 1$.  Assume $\pi_m(\Ga\setminus A) >0$.
 Due to the statement above, there exists $g\in \Aff$ such that
 $$
 \int_{\Ga(\X)} 1_{\Ga\setminus A} V_g 1_A >0.
 $$
 But due to the invariance of $1_A$  it means
 $$
 \int_{\Ga(\X)} 1_{\Ga\setminus A} 1_A d\pi_m >0
 $$
 that is impossible.
\end{proof}

\end{document}